\tikzset{->-/.style={decoration={
  markings,
  mark=at position .45 with {\arrow{>}}},postaction={decorate}}}
\tikzset{->-/.style={decoration={
  markings,
  mark=at position .45 with {\arrow{>}}},postaction={decorate}}}
\def\cG{\mathcal{G}}
\def\cM{{\mathcal{M}}}
\def\Z{\mathbb{Z}}
\def\b1{{\bf 1}}
\def\E{\mathsf{E}}
\def\H{\mathsf{H}}
\def\P{\mathbb{P}}
\def\bP{\mathbb{P}}
\def\cP{\mathcal{P}}
\def\cL{\mathcal{L}}
\newcommand*\colvec[1]{
        \global\colveccount#1
        \begin{pmatrix}
        \colvecnext
}
\def\colvecnext#1{
        #1
        \global\advance\colveccount-1
        \ifnum\colveccount>0
                \\
                \expandafter\colvecnext
        \else
                \end{pmatrix}
        \fi
}
\newtheorem{definition}{Definition}[section]
\newtheorem{theorem}[definition]{Theorem}
\newtheorem{example}[definition]{Example}
\newtheorem{proposition}[definition]{Proposition}
\newtheorem{corollary}[definition]{Corollary}
\newtheorem{lemma}[definition]{Lemma}
\newtheorem{remark}[definition]{Remark}
\newtheorem{question}[definition]{Question}
\newcommand{\Pic}{\mathsf{Pic}}
\newcommand{\Bl}{{\mathsf{Bl}}}
\newcommand{\ev}{{\text{ev}}}
\DeclareMathOperator{\cHom}{\mathscr{H}\text{\kern -3pt {\calligra\large om}}\,}
\def\cL{\mathcal{L}}
\title{A Brill-Noether theorem for (toric) surfaces}
\author{Alessio Cela}
\address{ University of Cambridge, Department of pure mathematics and mathematical statistics
\hfill \newline\texttt{}
 \indent Centre for Mathematical Sciences, Wilberforce Road Cambridge, UK} \email{{\tt ac2758@cam.ac.uk}}
 \author{Carl Lian}
\address{Washington University in St. Louis, Department of Mathematics, 1 Brookings Drive
\hfill \newline\texttt{}
 \indent  St. Louis, MO 63130} \email{{\tt clian@wustl.edu}}
\date{\today}
\begin{document}

\maketitle

\begin{abstract}
The classical Brill-Noether theorem states that a map from a general curve to a projective space deforms in a family of expected dimension as long as its image does not lie in any hyperplane. In this note, we observe, as a direct consequence of standard results on Severi varieties, an analogous statement for maps from a general curve to any smooth, projective surface. Namely, a non-constant map deforms in a family of expected dimension as long as its image has anti-canonical degree at least 4. 

In the case of toric surfaces, curves of anti-canonical degree at most 3 admit a particularly elegant description in terms certain toric contractions. We raise the question of whether a Brill-Noether theorem could hold for toric varieties of higher dimension.
\end{abstract}

\section{Introduction}

Let $C$ be a general curve over $\mathbb{C}$, by which we mean a general point of $\cM_g$. Classical Brill-Noether theory concerns maps from $C$ to projective spaces. One formulation of the Brill-Noether theorem is the following.

\begin{theorem}\label{thm:BN_classical}
    Suppose that $f:C\to\P^r$ is a morphism of degree $d$ which is \emph{non-degenerate}, that is, the image of $f$ does not lie in any hyperplane. Then, the map $f$ moves in a family of expected dimension. That is, the moduli space of maps $\cM_d(C,\P^r)$ of maps $C \to \P^r$ of degree $d$ is pure of dimension
    \begin{equation*}
        (r+1)d+r(1-g),
    \end{equation*}
    the expected, in a neighborhood of $f$.
\end{theorem}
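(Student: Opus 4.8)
The plan is to reduce the statement to the Gieseker--Petri theorem via the deformation theory of maps. First I would recall that $\cM_d(C,\P^r)$ is an open subscheme of the scheme of morphisms $\mathrm{Mor}(C,\P^r)$, whose tangent space at a point $[f]$ is $H^0(C,f^*T_{\P^r})$ and whose obstructions lie in $H^1(C,f^*T_{\P^r})$. By standard deformation theory, every component through $[f]$ has dimension at least $\chi(C,f^*T_{\P^r})$, and $[f]$ is a smooth point of exactly the expected dimension as soon as the obstruction group $H^1(C,f^*T_{\P^r})$ vanishes; moreover $\chi(C,f^*T_{\P^r})=(r+1)d+r(1-g)$ by Riemann--Roch applied to the pullback of the Euler sequence. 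So it suffices to prove that $H^1(C,f^*T_{\P^r})=0$ whenever $f$ is non-degenerate.

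The key step is to pull back the Euler sequence $0\to\cO_{\P^r}\to\cO_{\P^r}(1)^{\oplus(r+1)}\to T_{\P^r}\to 0$ along $f$, obtaining
\begin{equation*}
0\to\cO_C\to L^{\oplus(r+1)}\to f^*T_{\P^r}\to 0,
\end{equation*}
where $L=f^*\cO_{\P^r}(1)$ has degree $d$ and the first map is $1\mapsto(s_0,\dots,s_r)$ for $s_i=f^*x_i\in H^0(C,L)$. Since $C$ is a curve, $H^2(C,\cO_C)=0$, so the long exact cohomology sequence identifies $H^1(C,f^*T_{\P^r})$ with the cokernel of the connecting map $\delta\colon H^1(C,\cO_C)\to H^1(C,L)^{\oplus(r+1)}$, which is cup product with $(s_0,\dots,s_r)$. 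I would then dualize via Serre duality: $\delta$ is surjective if and only if its dual
\begin{equation*}
\bigoplus_{i=0}^r H^0(C,K_C-L)\longrightarrow H^0(C,K_C),\qquad (\eta_0,\dots,\eta_r)\mapsto\sum_i s_i\eta_i,
\end{equation*}
is injective. This dual map is precisely the restriction of the Petri map $\mu_L\colon H^0(C,L)\otimes H^0(C,K_C-L)\to H^0(C,K_C)$ to $V\otimes H^0(C,K_C-L)$, where $V=\langle s_0,\dots,s_r\rangle\subseteq H^0(C,L)$.

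Here is where non-degeneracy and the genericity of $C$ enter. Non-degeneracy of $f$ says exactly that $s_0,\dots,s_r$ are linearly independent, so $V$ is genuinely $(r+1)$-dimensional and the displayed map is the honest restriction of $\mu_L$. The Gieseker--Petri theorem, valid for a general curve $C$, asserts that $\mu_L$ is injective for \emph{every} line bundle $L$ on $C$; its restriction to any subspace $V\otimes H^0(C,K_C-L)$ is therefore also injective. Hence $\delta$ is surjective, $H^1(C,f^*T_{\P^r})=0$, and $[f]$ is a smooth point of $\cM_d(C,\P^r)$ of the expected dimension. Finally, vanishing of $H^1$ is an open condition by semicontinuity, so the same conclusion holds at all nearby maps, giving purity of the expected dimension in a neighborhood of $[f]$. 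The main obstacle is entirely concentrated in the appeal to Gieseker--Petri: the deformation-theoretic reduction is formal, but injectivity of the Petri map for a general curve is the genuinely deep input, and it is exactly what converts the hypothesis on a \emph{single} non-degenerate map into a statement about the \emph{whole} moduli space.
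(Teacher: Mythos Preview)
The paper does not give its own proof of this theorem; it is quoted as the classical Brill--Noether theorem, with the remark immediately afterward that the Gieseker--Petri theorem gives the stronger smoothness statement, citing \cite[Theorem V.1.6]{acgh}. Your argument is correct and is exactly the standard deduction the paper alludes to: pull back the Euler sequence, identify the obstruction space $H^1(C,f^{*}T_{\P^r})$ with the cokernel of the Serre-dual of the Petri map restricted to $V\otimes H^0(C,K_C\otimes L^{-1})$, and invoke Gieseker--Petri to kill it. In fact you prove more than the stated theorem (smoothness at $f$, not merely purity of expected dimension), which is precisely the refinement the paper mentions in the paragraph following the statement.
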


 In fact, the Gieseker-Petri theorem asserts further that the moduli space of maps is \emph{smooth} at $f$. Indeed, a map $f$ whose image does not lie in a hyperplane is given, up to automorphisms on $\P^r$, by a base-point free, rank $r$ linear series, that is, a line bundle $\cL$ on $C$ and a $(r+1)$-dimensional space of sections $V\subset H^0(C,\cL)$. The moduli space $\cG^r_d(C)$ of rank $r$, degree $d$ linear series is smooth of dimension $\rho(g,r,d)=g-(r+1)(g-d+r)$ \cite[Theorem V.1.6]{acgh}.

In this note, we prove the following analogous statement for maps to a smooth, projective surface.

\begin{theorem}\label{thm:BN_surface}
Let $S$ be a smooth, projective surface. Let $f:C\to S$ be a morphism with $f_{*}[C]=\beta\neq0$, and let $C'=f(C)$ be its scheme-theoretic image. Suppose that $[C']\cdot (-K_S)\ge 4$. Then, the map $f$ moves in a family of expected dimension. That is, the moduli space of maps $\cM_\beta(C,S)$ is pure of dimension
    \begin{equation*}
        \beta\cdot (-K_S)+2(1-g),
    \end{equation*}
    the expected, in a neighborhood of $f$. The moduli space $\cM_\beta(C,S)$ is furthermore generically smooth in a neighborhood of $f$.
\end{theorem}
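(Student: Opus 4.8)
The plan is to read the result off from the deformation theory of maps with a \emph{fixed} domain, combined with the standard dimension and smoothness theory of Severi varieties, using the genericity of $C$ to force dominance over $\cM_g$. Since the domain $C$ is fixed, the tangent space to $\cM_\beta(C,S)$ at $f$ is $H^0(C,f^{*}T_S)$ and the obstructions lie in $H^1(C,f^{*}T_S)$; hence every component through $f$ has dimension at least
\[
\chi(C,f^{*}T_S)=\deg f^{*}(-K_S)+2(1-g)=\beta\cdot(-K_S)+2(1-g),
\]
with equality and smoothness exactly when $H^1(C,f^{*}T_S)=0$. So the expected dimension is automatically a lower bound, and the entire content is to produce a matching upper bound and to show that $H^1(C,f^{*}T_S)$ vanishes at a general point of each component through $f$.

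First I would treat the case where $f$ is birational onto $C'$, so that $C$ is the normalization of $C'$ and $C'$ has geometric genus $g$. Sending a deformation of $f$ to its image identifies a neighborhood of $f$ with a subscheme of the Severi variety $V=V_{\beta,g}(S)$ lying over $[C]\in\cM_g$; since $\Aut(C)$ is trivial for general $C$, this identification is injective. Writing $N_f$ for the normal sheaf and using $0\to T_C\to f^{*}T_S\to N_f\to 0$, Serre duality gives $H^1(N_f)=H^0(K_C\otimes N_f^{\vee})^{\vee}$, and $\deg(K_C\otimes N_f^{\vee})=K_S\cdot\beta<0$ because $-K_S\cdot\beta\ge 4>0$; hence $H^1(N_f)=0$ and $V$ is smooth of dimension $-K_S\cdot\beta+g-1$ near $C'$. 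Because $C$ is general and carries the map $f$, the component of $V$ through $C'$ cannot have image contained in a proper subvariety of $\cM_g$, so the moduli map $V\to\cM_g$ is dominant there; generic smoothness in characteristic zero then makes the fibre over the general point $[C]$ smooth of dimension $(-K_S\cdot\beta+g-1)-(3g-3)=\beta\cdot(-K_S)+2(1-g)$. This gives the upper bound, and in fact smoothness, near $f$ in the birational case.

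The essential case, and the one that forces the hypothesis $[C']\cdot(-K_S)\ge 4$, is when $f$ is a degree $k\ge 2$ cover of $C'$. Since a general curve of genus $g$ admits no nonconstant map to a curve of genus $0<h<g$, the image $C'$ must be rational, so $f$ factors as $C\xrightarrow{\phi}\P^1\xrightarrow{\psi}C'\subset S$ with $\deg\phi=k$ and $\beta=k[C']$. Up to the $\mathrm{PGL}_2$ acting on the intermediate $\P^1$, such maps are parametrized by $\cM_k(C,\P^1)\times\mathrm{Mor}_{[C']}(\P^1,S)$. By Theorem~\ref{thm:BN_classical} (with $r=1$) the first factor has dimension $2k+1-g$, and for an unobstructed image the second has dimension $[C']\cdot(-K_S)+2$, so this locus has dimension $2k+[C']\cdot(-K_S)-g$. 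Subtracting the expected dimension $k\,[C']\cdot(-K_S)+2-2g$, the excess is
\[
g-(k-1)\bigl([C']\cdot(-K_S)-2\bigr).
\]
As the cover $\phi$ exists on the general curve $C$, we must have $k\ge\mathrm{gon}(C)=\lceil(g+2)/2\rceil$, whence $k-1\ge g/2$; therefore, once $[C']\cdot(-K_S)\ge 4$, the excess is at most $g-(g/2)\cdot 2=0$. Thus the non-birational locus cannot exceed the expected dimension, and combined with the lower bound we obtain purity near $f$.

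I expect this third step to be the main obstacle: controlling the non-birational maps, and confirming that the sharp threshold is exactly $4$. The computation above shows the bound is sharp — for $[C']\cdot(-K_S)=3$ (e.g.\ a line in $\P^2$, which a general curve maps onto through a pencil) the excess becomes positive — so the constant $4$ is not an artifact. The two remaining points of care, both of which I would handle by applying the same normal-sheaf vanishing to the composite $\psi\circ\phi$, are the contribution of possibly-obstructed rational images $C'$ (where $\mathrm{Mor}_{[C']}(\P^1,S)$ could exceed its expected dimension, and which should form a strictly smaller locus) and the verification of generic smoothness, rather than merely the dimension bound, on components consisting entirely of multiple covers.
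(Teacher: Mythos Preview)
Your overall architecture is the paper's: handle the birational case via the Severi variety $Y_{\beta,g}(S)\to\cM_g$, and bound the multiple-cover loci by exactly the excess $g-(k-1)\bigl([C']\cdot(-K_S)-2\bigr)$ that you compute. Two points, however, do not go through as written.

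First, your vanishing $H^1(N_f)=0$ rests on the degree computation $\deg(K_C\otimes N_f^{\vee})=K_S\cdot\beta$, which presumes $N_f$ is a line bundle. If $C'$ has a cusp (or any point where $f$ is not an immersion), $N_f$ acquires torsion of some length $\ell>0$; the torsion-free quotient then has degree $-K_S\cdot\beta+2g-2-\ell$, and the Serre-dual degree becomes $K_S\cdot\beta+\ell$, which can be non-negative. The paper does not try to prove smoothness of the Severi variety at the given $C'$; it invokes the standard result (Theorem~\ref{severi_thm}) that every component of $Y_{\beta,g}(S)$ already has the expected dimension once $\beta\cdot(-K_S)\ge 1$, and that a \emph{general} point is nodal, hence smooth, once $\beta\cdot(-K_S)\ge 4$. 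This is precisely the ``standard dimension and smoothness theory of Severi varieties'' you invoked in your opening sentence; use it rather than re-deriving it. Relatedly, the paper cases on a \emph{general} point of the component through $f$ (its Proposition~\ref{prop:multiple covers}), not on $f$ itself; your birational and multiple-cover cases should be read that way.

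Second, the assertion ``the image $C'$ must be rational'' fails when $g=1$: a general elliptic curve admits finitely many degree-$m$ isogenies to elliptic curves for every $m\ge2$. The paper dispatches this in one line: such a locus has dimension $\beta'\cdot(-K_S)$, strictly less than the expected $m\beta'\cdot(-K_S)$, so it cannot fill a component. The paper also closes your acknowledged loose end about generic smoothness on the boundary multiple-cover stratum $[C']\cdot(-K_S)=4$, $g=2k-2$: there the locus is (locally) a product of a reduced zero-dimensional scheme of $g^1_k$'s on $C$ (Gieseker--Petri) with the generically smooth Severi variety $Y_{\beta',0}(S)$.
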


When $S=\P^2$, the curves of anti-canonical degree at most 3 are exactly lines, so Theorem \ref{thm:BN_surface} recovers the classical Brill-Noether theorem when $r=2$. Theorem \ref{thm:BN_surface} is proven in \S\ref{sec:severivarieties}. As we will see, it is a consequence of well-known transversality results for Severi varieties of \emph{embedded} curves on $S$; the work is to compare dimensions for moduli of maps which may or may not be birational onto their image. This comparison is also standard, see for example \cite[\S 3]{vakil}. 

If $S$ is a \emph{toric} surface, then the loci on $\cM_\beta(C,S)$ that must be removed in order to obtain a moduli space that is pure of expected dimension, namely the loci of multiple covers of curves of anti-canonical degree at most 3, admit a particularly simple description. The following is proven in \S\ref{sec:toric_surface}.

\begin{theorem}\label{thm:toric_low_degree}
Let $S$ be a smooth, projective toric surface. Let $C'\subset S$ be an integral curve with $[C']\cdot (-K_S)\le 3$, and assume that $C'$ is not a component of the toric boundary. Then, one of the following holds:
\begin{enumerate}
    \item[(i)] $[C']\cdot (-K_S)=2$, in which case there exists a toric morphism $\pi:S\to \P^1$ contracting $C'$ to a point, or
    \item[(ii)] $[C']\cdot (-K_S)=3$, in which case there exists a \emph{fake toric projective plane} $S'$ (Definition \ref{def:fake_P2}) and a birational toric morphism $\pi:S\to S'$ induced by deletion of rays of the fan of $S$, such that $\pi(C')$ lies in the primitive linear system of $S'$.
\end{enumerate}
\end{theorem}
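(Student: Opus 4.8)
The plan is to read everything off from the intersection numbers of $\beta=[C']$ with the torus-invariant boundary divisors, combined with the fact that on a complete toric surface nef classes are semiample and their contractions are again toric. Write $S=X_\Sigma$, let $v_1,\dots,v_n$ be the rays of the complete fan $\Sigma$ and $D_1,\dots,D_n$ the corresponding boundary divisors, so that $-K_S=\sum_i D_i$. Since $C'$ is integral and is none of the $D_i$, each $m_i:=\beta\cdot D_i=C'\cdot D_i$ is a nonnegative integer, and $\sum_i m_i=\beta\cdot(-K_S)=:d\le 3$. Because the classes $[D_i]$ generate the Mori cone $\overline{\mathrm{NE}}(S)$ of a complete toric surface, the conditions $m_i\ge 0$ say precisely that $\beta$ is nef. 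Finally, as $S\setminus\bigcup_i D_i\cong(\C^*)^2$ is affine and contains no complete curve, $C'$ must meet the boundary, so $d\ge 1$.

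The crux is a single numerical inequality extracted from the ample model. Since $\beta$ is nef it is semiample, so it defines a toric morphism $\phi\colon S\to X$ with $X$ a complete toric surface and an ample class $H$ on $X$ with $\beta=\phi^{*}H$. A projection-formula computation, using $\phi^{*}K_X\equiv K_S$ modulo $\phi$-exceptional divisors (which pair to zero with $\phi^{*}H$), gives $m_i=H\cdot\phi_{*}D_i$ and hence $d=H\cdot(-K_X)=\sum_{\rho}H\cdot D^{X}_{\rho}$, summed over the rays of $X$. As $H$ is ample each summand is a positive integer, so whenever $\dim X=2$ (equivalently, whenever $\beta$ is big, i.e. $\beta^{2}>0$) we get $d\ge\#\{\text{rays of }X\}\ge 3$. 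Combining this with adjunction $\beta^{2}=2p_a(C')-2+d$ disposes of the low cases: for $d\le 2$ the class $\beta$ cannot be big, so (being nef) $\beta^{2}=0$; but $\beta^{2}$ has the same parity as $d$, so $\beta^{2}=0$ is impossible when $d=1$. This rules out $d=1$ (and $d=0$), leaving $d\in\{2,3\}$.

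For $d=2$ we then have $\beta^{2}=0$ with $\beta$ nef and nonzero, so its ample model is a toric fibration $\pi=\phi\colon S\to X=\P^{1}$ with $\beta$ pulled back from $\P^{1}$. Writing $F$ for the fiber class, $\beta=kF$ and $-K_S\cdot F=2$ force $k=1$; since $C'$ is an irreducible curve in the fiber class it satisfies $C'\cdot F=0$ and is therefore contracted by $\pi$, giving (i). For $d=3$ adjunction gives $\beta^{2}=2p_a+1\ge 1$, so $\beta$ is big, $\dim X=2$, and $\pi=\phi\colon S\to X$ is birational. Now the inequality $3=\sum_{\rho}H\cdot D^{X}_{\rho}\ge\#\{\text{rays of }X\}\ge 3$ is forced to be an equality, so $X$ has exactly three rays, with each boundary divisor meeting $H$ once. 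Thus $X$ is a fake projective plane (Definition \ref{def:fake_P2}), $H$ is its primitive polarization, and $\pi$, being the big nef contraction, is exactly the coarsening of $\Sigma$ deleting the rays $v_i$ with $m_i=0$. Since $\pi_{*}\beta=H$, the image $\pi(C')$ lies in the primitive linear system, giving (ii).

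The main obstacle is not any hard estimate but rather pinning down the toric dictionary cleanly: that a nef class on a complete toric surface is semiample with toric ample model; that for a big nef class this model is the birational contraction deleting precisely the rays orthogonal to $\beta$; and that for a nef class of self-intersection zero it is a $\P^{1}$-fibration. The remaining care is bookkeeping—verifying $m_i=H\cdot\phi_{*}D_i$ and $d=H\cdot(-K_X)$ through $\phi$—and matching the three-ray image, together with the polarization $H$ satisfying $H\cdot D^{X}_{\rho}=1$ for all $\rho$, to the paper's notions of fake projective plane and primitive linear system. Once the inequality $d\ge\#\{\text{rays of }X\}$ is in place, it does essentially all of the work.
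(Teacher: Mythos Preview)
Your argument is correct but takes a genuinely different route from the paper. The paper works with Newton polygons: it constructs a circumscribing lattice polygon $\overline{\cP}(F)$ whose directed edge lengths equal the intersection numbers $C'\cdot D_i$ (Lemma~\ref{degree_perimeter}), so that $[C']\cdot(-K_S)$ is the lattice perimeter of $\overline{\cP}(F)$; the cases $d=2$ and $d=3$ then correspond to $\overline{\cP}(F)$ being a segment (two opposite unit edges) or a unit lattice triangle, from which the toric morphisms to $\P^1$ or to a fake $\P^2$ are read off combinatorially from the edge directions. You instead run the toric contraction of the nef class $\beta=[C']$: your ample model $X$ has fan equal to the normal fan of the polytope $P_\beta$, so your count $d\ge\#\{\text{rays of }X\}$ is exactly dual to the paper's perimeter bound, and the two proofs are secretly computing the same thing. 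Your approach is cleaner conceptually and avoids the explicit chart-by-chart computation of Lemma~\ref{degree_perimeter}; the paper's approach is more self-contained, needing no appeal to semiampleness of nef toric divisors or to the structure of toric contractions. One step you leave implicit deserves a line: that a complete three-ray toric surface carrying an ample $H$ with $H\cdot D_\rho^X=1$ for every ray $\rho$ must have primitive ray generators summing to zero (pair $H$ against the relations $\sum_j m(u_j)[D_j^X]=0$ for $m\in M$), and that $H$ is then the generator of $\Pic(X)\cong\Z$ (since $1=H\cdot D_\rho^X$ forces $H$ primitive). These are routine, but they are precisely what matches your $X$ and $H$ to Definition~\ref{def:fake_P2} and its primitive linear system.
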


We suspect that Theorems \ref{thm:BN_surface} and \ref{thm:toric_low_degree} are known in some form to experts, but we were not able to locate these precise statements in the literature. Combining these statements, one obtains a ``Brill-Noether theorem'' for toric surfaces.

\begin{corollary}\label{cor:BN_toric_surface}
Let $S$ be a smooth, projective toric surface. Let $f:C\to S$ be a morphism with $f_{*}[C]=\beta\neq0$, and let $C'=f(C)$ be its scheme-theoretic image. Assume that $C'$ is not contained in the toric boundary of $S$, and furthermore that $C'$ satisfies neither (i) nor (ii) of Theorem \ref{thm:toric_low_degree}. Then, the map $f$ moves in a family of expected dimension. The moduli space $\cM_\beta(C,S)$ is furthermore generically smooth in a neighborhood of $f$.
\end{corollary}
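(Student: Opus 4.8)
The plan is to derive the corollary as an immediate combination of Theorems \ref{thm:BN_surface} and \ref{thm:toric_low_degree}; the only real work is a short logical reduction showing that the stated hypotheses force the numerical condition $[C']\cdot(-K_S)\ge 4$ required by Theorem \ref{thm:BN_surface}.

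First I would note that because $\beta\neq 0$, the map $f$ is non-constant, so its scheme-theoretic image $C'=f(C)$ is one-dimensional. Since $C$ is a general point of $\cM_g$, it is smooth and irreducible, hence reduced and irreducible; the scheme-theoretic image of such a curve is therefore an integral curve. This also shows that the hypothesis that $C'$ is not contained in the toric boundary is equivalent to the statement that $C'$ is not an irreducible component of the boundary, which is exactly the form in which Theorem \ref{thm:toric_low_degree} requires it: an integral curve contained in a union of prime divisors must coincide with one of them.

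Next I would apply the contrapositive of Theorem \ref{thm:toric_low_degree}. That theorem asserts that every integral curve on $S$ which is not a boundary component and satisfies $[C']\cdot(-K_S)\le 3$ must fall into case (i) or case (ii). Under the hypotheses of the corollary, $C'$ is an integral non-boundary curve satisfying neither (i) nor (ii); since the first two conditions of the theorem's hypothesis do hold, the failure of its conclusion can only be accounted for by the failure of the numerical condition. Hence $[C']\cdot(-K_S)>3$, i.e. $[C']\cdot(-K_S)\ge 4$.

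With this inequality established, the corollary follows verbatim from Theorem \ref{thm:BN_surface}: the moduli space $\cM_\beta(C,S)$ is pure of the expected dimension $\beta\cdot(-K_S)+2(1-g)$ and generically smooth in a neighborhood of $f$. The proof carries essentially no obstacle beyond this bookkeeping; the one point deserving a moment's attention is the equivalence, used above, between being not contained in the toric boundary and not being a component of it for the integral curve $C'$, together with the observation that the labeled cases (i) and (ii) of Theorem \ref{thm:toric_low_degree} exhaust all anti-canonical degrees at most $3$, so that excluding them indeed leaves only degree $\ge 4$.
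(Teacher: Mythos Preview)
Your proposal is correct and matches the paper's approach: the corollary is stated there as an immediate consequence of combining Theorems \ref{thm:BN_surface} and \ref{thm:toric_low_degree}, with no separate proof given. Your logical reduction---using the contrapositive of Theorem \ref{thm:toric_low_degree} to force $[C']\cdot(-K_S)\ge 4$ and then invoking Theorem \ref{thm:BN_surface}---is exactly the intended argument.
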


In other words, if $f$ fails to move in a family of expected dimension, then the failure is explained by the degeneracy of $f$ after post-composing with a certain toric morphism to either a projective line or to a fake toric projective plane.

\begin{question}\label{question:higher_dim}
    Does Corollary \ref{cor:BN_toric_surface} admit a generalization to smooth, projective toric varieties of any dimension?
\end{question}

Classical Brill-Noether theory concerns sections of line bundles on curves, and maps to toric varieties are also governed by sections of line bundles on the source \cite{cox}, so this seems to be a natural generalized setting for Brill-Noether theory. While the methods of this note are highly specific to the case of surfaces, some hope that an extension to higher dimension could be possible is afforded by the following result of Farkas.

\begin{theorem}\cite[Theorem 0.1]{farkas}\label{thm:farkas}
    Let $b:Y\to \P^r$ be the blow-up along a (torus-invariant) linear space. Let $f:C\to Y$ be a morphism with $f_{*}[C]=\beta$. Suppose that $b(f(C))\subset\bP^r$ is not contained in any hyperplane. Then, the moduli space of maps $\cM_\beta(C,Y)$ is pure of dimension
    \begin{equation*}
        \beta\cdot(-K_Y)+r(1-g),
    \end{equation*}
    the expected, in a neighborhood of $f$.
\end{theorem}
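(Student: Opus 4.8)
The plan is to reduce the statement to the classical Brill--Noether theorem (Theorem \ref{thm:BN_classical}) on $\P^r$ via the universal property of the blow-up, and to absorb the gap between the two expected dimensions into the codimension of a contact locus. Write $b\colon Y\to\P^r$ for the blow-up of a torus-invariant linear subspace $L\cong\P^k$ of codimension $c=r-k$, with exceptional divisor $E$ and $H$ the hyperplane class, so that $-K_Y=(r+1)b^*H-(c-1)E$. The class $\beta$ is recorded by $d=\beta\cdot b^*H=\deg(b\circ f)$ and $m=\beta\cdot E$, and the expected dimension in the statement, $(r+1)d-(c-1)m+r(1-g)$, differs by exactly $(c-1)m$ from the expected dimension $(r+1)d+r(1-g)$ of $\cM_d(C,\P^r)$.

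First I would build the comparison morphism. As $g:=b\circ f$ is non-degenerate, its image is a curve not contained in $L$, so $f(C)\not\subset E$; for any $g'\colon C\to\P^r$ whose image is not contained in $L$ the inverse-image ideal $(g')^{-1}\cI_L\cdot\cO_C$ is invertible because $C$ is a smooth curve, so $g'$ admits a unique lift to $Y$. Hence near $f$ the assignment $f'\mapsto b\circ f'$ defines an injective morphism $\Phi\colon\cM_\beta(C,Y)\to\cM_d(C,\P^r)$, and the constraint $\beta\cdot E=m$ forces $(f')^*\cO_Y(E)=\cO_C((g')^{-1}(L))$ to have degree $m$. Consequently $\Phi$ identifies a neighborhood of $f$ with a neighborhood of $g$ inside the \emph{contact locus}
\begin{equation*}
\cW_m=\{\,g'\in\cM_d(C,\P^r)\ :\ \deg (g')^{-1}(L)\geq m\,\}.
\end{equation*}
Since $\Phi$ is injective it has zero-dimensional fibers and hence preserves local dimension onto its image, giving $\dim_f\cM_\beta(C,Y)=\dim_g\Phi(\cM_\beta(C,Y))\leq\dim_g\cW_m$.

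The expected dimension $\chi(C,(f')^*T_Y)=\beta\cdot(-K_Y)+r(1-g)$ is independent of $f'$, and deformation theory supplies the automatic lower bound $\dim_{f'}\cM_\beta(C,Y)\geq\chi(C,(f')^*T_Y)$ at every point. Thus it remains to establish the matching upper bound, which by the previous paragraph reduces to showing that $\cW_m$ has codimension at least $(c-1)m$ in $\cM_d(C,\P^r)$ near $g$. Here the hypothesis enters decisively: since $g$ is non-degenerate, Theorem \ref{thm:BN_classical} guarantees that $\cM_d(C,\P^r)$ is smooth of the expected dimension at $g$, so the codimension of $\cW_m$ may be analyzed inside a smooth ambient space.

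The technical heart, and the step I expect to be hardest, is the \emph{transversality} of the contact conditions: meeting the codimension-$c$ space $L$ with total multiplicity $m$ should impose $(c-1)m$ independent conditions, with no excess. Concretely I would study the universal evaluation map $\ev\colon C\times\cM_d(C,\P^r)\to\P^r$ near $g$ and show that $\ev^{-1}(L)$ is smooth of the expected codimension $c$, so that each intersection point of $g(C)$ with $L$ contributes codimension $c$ in the target minus the single free parameter of its position on $C$, i.e. codimension $c-1$; iterating over the $m$ incidences, with the appropriate jet-transversality statement at points of higher contact, then yields codimension $(c-1)m$. The non-degeneracy of $g$ is exactly what supplies enough first-order deformations to move each intersection point independently and in every direction normal to $L$: writing $g$ as a non-degenerate linear series $(\cL,V)$ with $L$ cut out by $c$ of the coordinate sections, the condition $\deg (g')^{-1}(L)\geq m$ becomes the requirement that these $c$ sections share a common effective divisor of degree $m$, a Brill--Noether-type condition whose expected codimension is controlled by the surjectivity of the associated Petri-type evaluation map and the smoothness from Theorem \ref{thm:BN_classical}. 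Combining this upper bound with the lower bound above forces every component of $\cM_\beta(C,Y)$ through $f$ to have dimension exactly $\beta\cdot(-K_Y)+r(1-g)$, so the moduli space is pure of the expected dimension in a neighborhood of $f$.
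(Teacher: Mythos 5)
First, note that the paper does not prove this statement at all: it is imported verbatim from Farkas, and the paper only remarks that the formulation in terms of maps to the blow-up is equivalent to Farkas's formulation in terms of linear series with secant planes, citing \cite[Corollary 2.3.4]{cl_complete} for the translation. Your first two paragraphs essentially reprove that translation, and they do so correctly: the universal property of the blow-up identifies, near $f$, the space $\cM_\beta(C,Y)$ with the contact locus $\cW_m\subset\cM_d(C,\P^r)$ of maps whose scheme-theoretic preimage of $L$ has degree $m$ (equivalently, linear series in which the $c$ sections cutting out $L$ acquire a common divisor of degree $m$, i.e.\ an $m$-secant condition), and the bookkeeping $-K_Y=(r+1)b^{*}H-(c-1)E$ correctly matches the deficiency $(c-1)m$ with the expected codimension of $\cW_m$. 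The lower bound $\dim_{f'}\cM_\beta(C,Y)\ge\chi(C,(f')^{*}T_Y)$ is also standard and correctly invoked.

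The gap is that what you call the ``technical heart'' is not a hard step of the proof --- it \emph{is} the theorem, and your sketch of it does not go through. Showing that $\cW_m$ has codimension exactly $(c-1)m$ near $g$ is precisely Farkas's result on secant loci of linear series on a general curve, and it does not follow from the smoothness of $\cM_d(C,\P^r)$ at $g$ (Gieseker--Petri): smoothness of the ambient space says nothing about excess dimension of a degeneracy locus inside it. Your proposed transversality argument fails at its first move: there is no reason the universal evaluation map $\ev\colon C\times\cM_d(C,\P^r)\to\P^r$ is transverse to $L$ --- non-degeneracy of a single map $g$ gives no pointwise surjectivity of the relevant normal evaluation, and the Brill--Noether/Petri statements available for the general curve $C$ concern the line bundle $\cL$ and its sections, not incidence of the image curve with a \emph{fixed} linear space $L$. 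Moreover, even granting transversality at isolated transverse intersection points, the locus $\cW_m$ is an $m$-fold incidence condition in which points collide (higher tangency with $L$), and the ``appropriate jet-transversality statement'' you defer to is exactly where naive dimension counts can and do fail for secant varieties of special linear series. Farkas's actual proof is not a transversality argument on the general curve at all: it proceeds by degeneration, analyzing limit linear series with prescribed secant behavior on chains of elliptic curves and bounding the dimension of each limit stratum. So your write-up correctly reduces the theorem to its known equivalent formulation but then assumes the substantive content; as a self-contained proof it is circular.
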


 Farkas's result is stated in the language of linear series with secants, but the formulation above is equivalent, see \cite[Corollary 2.3.4]{cl_complete}. See also \cite{cotterill, ungureanu, farkas2} for closely related results.

 Recall that the Gieseker-Petri theorem asserts further that the moduli space of non-degenerate maps from a general curve to $\mathbb{P}^r$ is smooth. Equivalently, if $f:C\to\bP^r$ is non-degenerate, then it is unobstructed, that is, we have $H^1(C,f^{*}T_{\bP^r})=0$. We do not know whether this remains the case in the settings of Corollary \ref{cor:BN_toric_surface} and Theorem \ref{thm:farkas}.

 \begin{question}\label{question:smoothness}
     Let $f:C\to S$ (resp. $f:C\to Y$) be a map as in Corollary \ref{cor:BN_toric_surface} (resp. Theorem \ref{thm:farkas}). Is the moduli space $\cM_\beta(C,S)$ (resp. $\cM_\beta(C,Y)$) smooth at $f$?
 \end{question}

 Whereas Corollary \ref{cor:BN_toric_surface} gives that $\cM_\beta(C,S)$ is \emph{generically} smooth in a neighborhood of $f$, generic smoothness of $\cM_\beta(C,Y)$ in the setting of Theorem \ref{thm:farkas} does not seem to follow directly from Farkas's proof. Note that Questions \ref{question:higher_dim} and \ref{question:smoothness} are only interesting when $C$ has positive genus. Indeed, a rational curve generically contained in the interior of a smooth, projective toric variety is free, and therefore unobstructed. (Rational curves contained in the boundary are more naturally viewed as maps to the appropriate toric stratum.)

\subsection{Acknowledgments}

We thank Renzo Cavalieri, Gavril Farkas, François Greer, Xuanchun Lu, Navid Nabijou, Dhruv Ranganathan, Terry (Dekun) Song, Montserrat Teixidor i Bigas, and Sameera Vemulapalli for conversations related to these ideas, and the referee for improvements to the exposition. Some work on this note took place at the NSF-funded ``Workshop on Tevelev degrees and related topics'' at the University of Illinois, Urbana-Champaign in October 2024; we thank Deniz Genlik and Felix Janda for their work in organizing this pleasant event. A.C. is supported by SNF grant P500PT-222363. C.L. has been supported by NSF Postdoctoral Fellowship DMS-2001976 and an AMS-Simons travel grant.

\section{Severi varieties and moduli of maps}\label{sec:severivarieties}

 Let $S$ be a smooth projective surface, and let $\beta\in H_2(S,\Z)$ be a non-zero curve class on $S$. 
 
 \begin{definition}
     Let $Y_{\beta,g}(S)$ be the Severi variety of reduced, irreducible embedded curves $C'\subset S$ of geometric genus $g$ with $[C']=\beta$. 
 \end{definition}
 
 The following facts are standard; see, for example Harris-Morrison \cite[Corollary 3.42]{book-HM} and its proof. The results are stated there for $S=\P^2$, but only the positivity assumptions on the curve class $\beta$ are needed.

\begin{theorem}\label{severi_thm}
    We have the following.
    \begin{enumerate}
     \item[(i)] If $\beta\cdot (-K_S)\ge 1$, then every component of $Y_{\beta,g}(S)$ has the expected dimension of 
     \begin{equation*}
         \beta\cdot (-K_S)+g-1.
     \end{equation*}
     \item[(ii)] If $\beta\cdot (-K_S)\ge 4$, then any general point of $Y_{\beta,g}(S)$ corresponds to a nodal curve $C\subset S$.
     \item[(iii)] If $C'\subset S$ is nodal and $\beta\cdot (-K_S)\ge 1$, then $Y_{\beta,g}(S)$ is smooth of expected dimension at $[C']$.
     \end{enumerate}
\end{theorem}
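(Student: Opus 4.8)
The plan is to translate all three statements into the deformation theory of the \emph{normalization map}. Given a point $[C']\in Y_{\beta,g}(S)$, let $\nu\colon X\to C'$ be the normalization and $f=\iota\circ\nu\colon X\to S$ the induced map from the smooth genus-$g$ curve $X$, which is birational onto its image. Since an integral curve is recovered from its normalization, $Y_{\beta,g}(S)$ is, in a neighborhood of $[C']$, isomorphic to the locus of maps birational onto their image inside $\overline{\mathcal M}_g(S,\beta)$ (the finite automorphisms of $X$ do not affect dimensions), and I would carry out the entire analysis on the latter space. The basic invariant is the normal sheaf $N_f=\mathrm{coker}(T_X\to f^{*}T_S)$, a rank-one sheaf fitting in $0\to T_X\to f^{*}T_S\to N_f\to 0$. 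From this sequence one computes the degree of $N_f$ (when $f$ is an immersion) or of its locally free part:
\[
\deg N_f=\deg f^{*}T_S-\deg T_X=\beta\cdot(-K_S)-(2-2g)=\beta\cdot(-K_S)+2g-2,
\]
so that $\chi(N_f)=\deg N_f+1-g=\beta\cdot(-K_S)+g-1$, exactly the expected dimension. Identifying the expected dimension as this Euler characteristic is the conceptual heart of all three parts.

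For part (iii), suppose $C'$ is nodal. Then each branch is smoothly immersed, so $f$ is an immersion and $N_f$ is a genuine \emph{line bundle} of degree $\beta\cdot(-K_S)+2g-2$. The tangent and obstruction spaces to $\overline{\mathcal M}_g(S,\beta)$ at $[f]$ are $H^0(X,N_f)$ and $H^1(X,N_f)$. The hypothesis $\beta\cdot(-K_S)\ge 1$ forces $\deg N_f\ge 2g-1>2g-2$, whence $H^1(X,N_f)\cong H^0(X,K_X\otimes N_f^{-1})^{\vee}=0$ by Serre duality. Thus $[f]$ is unobstructed and $Y_{\beta,g}(S)$ is smooth there of dimension $h^0(X,N_f)=\chi(N_f)=\beta\cdot(-K_S)+g-1$. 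This yields the smoothness and the exact local dimension simultaneously.

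For part (i), that \emph{every} component has the expected dimension, I would combine a lower and an upper bound. The lower bound $\dim_{[C']}Y_{\beta,g}(S)\ge\chi(N_f)$ is the general virtual-dimension estimate for moduli of maps and needs no hypothesis beyond $\beta\neq 0$. The upper bound is immediate for components whose general member is nodal, by part (iii); the remaining task is to bound the dimension at points where $f$ fails to be an immersion, i.e.\ where $C'$ has a cusp or worse and $N_f$ acquires a torsion subsheaf $\tau$ with locally free quotient $\overline N_f=N_f/\tau$. Here the naive vanishing can fail, since $\deg\overline N_f=\deg N_f-\mathrm{length}(\tau)$ may drop to $2g-2$ or below; one must therefore bound $\mathrm{length}(\tau)$ against the positivity of $\beta\cdot(-K_S)$ and show that such non-immersive curves fill out only lower-dimensional strata. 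Pushing this same stratification one step further gives part (ii): when $\beta\cdot(-K_S)\ge 4$ there is enough positivity that the locus of curves carrying any non-nodal singularity is a proper subvariety of each component, so that the generic member is nodal.

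I expect the main obstacle to be exactly this non-immersive analysis underlying the upper bound in (i) and the nodality in (ii). Whereas the nodal case collapses to a one-line Serre-duality vanishing, ruling out cusps, tacnodes, and higher singularities from generic members requires a genuine local study: one must verify that each such singularity imposes more conditions than the moduli it carries, and it is precisely here that the numerical threshold $\beta\cdot(-K_S)\ge 4$ enters, guaranteeing enough jet-separation along the curve to smooth any non-nodal singularity to nodes without leaving the class $\beta$. Establishing this threshold, rather than the clean nodal computation, is where the real work lies, and is the content of the Harris--Morrison analysis cited after the statement.
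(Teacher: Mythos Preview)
The paper does not give its own proof of this theorem; it is stated as a standard result and attributed to Harris--Morrison \cite[Corollary 3.42]{book-HM}, with the remark that the argument there (written for $S=\mathbb{P}^2$) uses only the positivity hypothesis on $\beta$. Your proposal is therefore not being compared against an in-paper proof but against the cited reference, and your sketch is precisely the Harris--Morrison/Arbarello--Cornalba approach: pass to the normalization map, identify the expected dimension as $\chi(N_f)$, and settle the nodal case (iii) by the one-line Serre-duality vanishing $\deg N_f\ge 2g-1\Rightarrow H^1(N_f)=0$. You have also correctly located where the genuine work lies---bounding the contribution of non-immersive loci for (i) and showing the non-nodal stratum is proper for (ii)---and you explicitly defer that analysis to the cited source, which is exactly what the paper does.

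One small sharpening worth noting: in your discussion of (i), the upper bound at a non-immersed $f$ is cleaner than you suggest. Since torsion has no $H^1$, one has $h^1(N_f)=h^1(\overline{N}_f)$ and hence $h^0(N_f)=\chi(N_f)+h^1(\overline{N}_f)$; the issue is thus purely whether $h^1(\overline{N}_f)$ can be nonzero for the \emph{general} member of a component, which happens only when the torsion length reaches $\beta\cdot(-K_S)$. Framing it this way makes the role of the hypothesis $\beta\cdot(-K_S)\ge 1$ more transparent and matches how the cited argument is organized.
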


We work instead with moduli spaces of maps.

\begin{definition}
    Let $C$ be a fixed smooth, projective, irreducible curve of genus $g$, and let $\beta\in H_2(S,\Z)$ be a non-zero curve class on $S$. Let $\cM_\beta(C,S)$ be the moduli space of maps $f:C\to S$ with $f_{*}[C]=\beta$.
\end{definition}

We say that such an $f$ is ``in class $\beta$.'' Two maps are only considered isomorphic if they are \emph{equal}, that is, an isomorphism of maps must restrict to the identity on both $C$ and $S$. The expected dimension of $\cM_\beta(C,S)$ is equal to
\begin{equation*}
    \chi(C,f^{*}T_S)=\beta\cdot (-K_S)-2g+2,
\end{equation*}
which is $\dim(\cM_g)=3g-3$ less than that of $Y_{\beta,g}(S)$.

There is a map $\mu:Y_{\beta,g}(S)\to \cM_g$ remembering the normalization $C$ of an embedded curve $C'\subset S$. Near a point $f\in \cM_\beta(C,S)$ birational onto its image, the space $\cM_\beta(C,S)$ is isomorphic to the fiber of $\mu$ over the geometric point $[C]\in \cM_g$. When $g=0$, this may be interpreted as the statement that there is a rational $\text{PGL}_2$-bundle $\nu:\cM_\beta(\P^1,S) \dashrightarrow Y_{\beta,0}(S)$ sending a map $f:\P^1\to S$ to $f(\P^1)$ whenever it is birational onto its image. Similarly, when $C$ has genus $1$, the non-empty fibers of the map $\nu:\cM_\beta(C,S) \dashrightarrow Y_{\beta,1}(S)$ remembering the image of a birational map are isomorphic to the 1-dimensional group scheme $\text{Aut}(C)$. 

 In particular, by Theorem \ref{severi_thm}, if $\beta\cdot (-K_S)\ge 4$ and $C$ is general, then $\cM_\beta(C,S)$ is generically smooth of the expected dimension near any $f$ birational onto its image. However, this may not be the case everywhere on the moduli space of maps, due to presence of multiple covers, where $f$ is not birational onto its image. Controlling these multiple covers is the main task in what follows.

 \begin{proposition}\label{prop:multiple covers}
 Let $C$ be a general curve and let $S$ be a smooth, projective surface. Let $\beta\in H_2(S,\mathbb{Z})$ be non-zero, effective curve class. Let $\cM\subset \cM_\beta(C,S)$ be an irreducible component.

 Let $f:C\to S$ be a general point of $\cM$, and let $C'\subset S$ be the scheme-theoretic image of $f$. Suppose that $[C']\cdot (-K_S) \ge 4$. Then, $\cM$ is generically smooth of expected dimension.
 \end{proposition}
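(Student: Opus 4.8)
The plan is to show that the general member $f$ of $\cM$ is forced, by a dimension count, to be birational onto its image—with a single borderline exception handled by hand—after which the statement follows from the Severi theory recalled above. Two general facts drive the argument. First, since $\cM_\beta(C,S)$ is an open subscheme of the $\Hom$-scheme $\Hom(C,S)$, every component has dimension at least $\chi(C,f^*T_S)=\beta\cdot(-K_S)-2g+2=:E$, the expected dimension. Second, if the general $f\in\cM$ is birational onto its image $C'$, then $[C']=\beta$, so $\beta\cdot(-K_S)\ge 4$; identifying a neighborhood of $f$ with the fiber of $\mu\colon Y_{\beta,g}(S)\to\cM_g$ over $[C]$ and invoking Theorem~\ref{severi_thm}, we conclude that $\cM$ is generically smooth of dimension $E$. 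Thus it suffices to rule out, or directly analyze, components whose general member is a genuine multiple cover.

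So suppose the general $f\in\cM$ factors as $C\xrightarrow{\phi}\widetilde{C'}\to C'\hookrightarrow S$ with $\deg\phi=k\ge 2$, where $\widetilde{C'}$ is the normalization of $C'$, of geometric genus $h$; then $[C']=\beta'$ with $\beta=k\beta'$ and, by hypothesis, $\beta'\cdot(-K_S)\ge 4$. Writing $Z_{k,h}\subset\cM_\beta(C,S)$ for the locus of such covers, the assignment $f\mapsto(C',\phi)$ identifies $Z_{k,h}$ with the pairs consisting of a point of $Y_{\beta',h}(S)$ and a degree-$k$ map $\phi\colon C\to\widetilde{C'}$, and we must bound $\dim Z_{k,h}$. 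I would first dispose of $h\ge 2$: by Riemann--Hurwitz the relevant Hurwitz space of degree-$k$ covers of genus-$h$ curves has dimension $3h-3+\bigl(2g-2-k(2h-2)\bigr)$, which is strictly less than $\dim\cM_g=3g-3$ for all $k\ge 2$ and $h\ge 2$; hence its source map to $\cM_g$ is not dominant and a general $C$ admits no such cover, so $Z_{k,h}=\varnothing$. (Only finitely many $(k,h)$ are relevant, as $k$ and $h$ are bounded in terms of $\beta$, so ``general $C$'' need only avoid finitely many proper subvarieties.) The case $h=1$ is similar: the analogous Hurwitz space has dimension $2g-1<3g-3$ once $g\ge 3$, so $Z_{k,1}=\varnothing$ there, while for $g\le 2$ the fibers of $Z_{k,1}\to Y_{\beta',1}(S)$ are at most one-dimensional (maps to a fixed elliptic curve vary only by translation), giving $\dim Z_{k,1}\le \beta'\cdot(-K_S)+1<E$.

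The essential case is $h=0$, where the image is rational and $C$ maps to $\widetilde{C'}\cong\P^1$. Here $Z_{k,0}$ fibers over $Y_{\beta',0}(S)$, of dimension $\beta'\cdot(-K_S)-1$, with fiber the space of degree-$k$ maps $C\to\P^1$; for a general $C$ this fiber is nonempty only when $\rho(g,1,k)=2k-2-g\ge 0$, and then has dimension $\dim G^1_k(C)+\dim\mathrm{PGL}_2=2k-g+1$. Thus $\dim Z_{k,0}=\beta'\cdot(-K_S)+2k-g$, and the inequality $\dim Z_{k,0}<E$ reduces to $2k+g-2<(k-1)\,\beta'\cdot(-K_S)$. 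Using $g\le 2k-2$ and $\beta'\cdot(-K_S)\ge 4$, the left side is at most $4(k-1)\le (k-1)\,\beta'\cdot(-K_S)$, so the inequality is strict—and hence $Z_{k,0}$ cannot be a full component—\emph{except} in the borderline case $g=2k-2$, $\beta'\cdot(-K_S)=4$, where $\dim Z_{k,0}=E$ exactly. This borderline case is the main obstacle, and I would treat it directly. By Theorem~\ref{severi_thm}(ii) a general such $C'$ is nodal, so its normalization $\nu\colon\P^1\to C'\subset S$ is an immersion with $\nu^*T_S\cong\cO(2)\oplus\cO(2)$ (the normal sheaf has degree $\beta'\cdot(-K_S)-2=2$, and the extension splits as $H^1(\P^1,\cO)=0$). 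Hence $f^*T_S\cong\bigl(\phi^*\cO_{\P^1}(2)\bigr)^{\oplus 2}$ and $H^1(C,f^*T_S)=H^1\!\bigl(C,\phi^*\cO_{\P^1}(2)\bigr)^{\oplus 2}$, which vanishes for a general $C$: indeed $\phi^*\cO_{\P^1}(2)=2A$ for the ($\rho=0$) gonal pencil $A$, and $h^1(2A)=0$ by a Gieseker--Petri-type genericity argument. Thus $f$ is unobstructed and $\cM$ is smooth of dimension $E$ at $f$.

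Combining the cases, any component $\cM$ whose general member has image of anti-canonical degree at least $4$ is generically smooth of the expected dimension $E$, as claimed. I expect the Hurwitz dimension estimates (ruling out $h\ge 1$ for general $C$) and the borderline cohomology vanishing to be the two steps requiring the most care; note that the threshold $4$ enters precisely in forcing strictness of the $h=0$ estimate.
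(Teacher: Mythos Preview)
Your argument is correct and follows the same overall architecture as the paper's: reduce to the birational case via Severi theory, rule out covers of curves of genus $h\ge 1$ for general $C$, run the dimension count for $h=0$, and isolate the single borderline case $\beta'\cdot(-K_S)=4$, $g=2k-2$.

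The one substantive difference is your treatment of the borderline case. The paper argues that, near a general $f$, the moduli space $\cM_\beta(C,S)$ is identified with the $6$-dimensional family of composites $C\to\bP^1\to S$, and then invokes reducedness of $G^1_k(C)$ (Gieseker--Petri) together with generic smoothness of the Severi variety $Y_{\beta',0}(S)$ (Theorem~\ref{severi_thm}(ii),(iii)). You instead prove unobstructedness directly: from nodality of a general $C'$ you get the splitting $\nu^*T_S\cong\cO_{\bP^1}(2)^{\oplus 2}$, hence $f^*T_S\cong (2A)^{\oplus 2}$, and then $H^1(C,2A)=0$. The last vanishing is indeed Gieseker--Petri: the base-point-free pencil trick gives $H^0(K_C-2A)=\ker\bigl(H^0(A)\otimes H^0(K_C-A)\to H^0(K_C)\bigr)$, which is zero for general $C$. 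Your route yields smoothness of $\cM_\beta(C,S)$ at the general such $f$ without needing to identify the local structure of the moduli space with a product; the paper's route makes that product structure explicit but then has to appeal to smoothness of both factors. Either way Gieseker--Petri is the essential input. Your handling of the $h=1$ cases for small $g$ via the crude bound $\dim Z_{k,1}\le \beta'\cdot(-K_S)+1$ is also fine, and slightly more explicit than the paper's one-line dismissal.
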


 \begin{proof}
Let $m$ be the degree of induced cover $g:C\to C'\subset S$. If $m=1$, then we are done by the above discussion, as $\cM_{\beta}(C,S)$ is isomorphic near $f$ to a general fiber of $\mu:Y_{\beta,g}(S)\to\cM_g$. We assume henceforth that $m>1$. In this case, the general curve $C$ admits no covers of a curve of positive geometric genus of degree $m>1$, unless $g=g(C')=1$. We consider the cases $g(C')=0$ and $g=g(C')=1$ separately.

Suppose first that $g(C')=0$. The moduli space of maps $f:C\to \P^1$ of degree $m$ has dimension $\rho(g,1,m)+\dim(\text{PGL}_2)=2m-g+1$. In particular, we need $\rho(g,1,m)=2m-2-g\ge0$.
By Theorem \ref{severi_thm}, the moduli space of embedded curves $C'\subset S$ in class $\beta':=[C']$ has dimension $\beta'\cdot (-K_S)-1$. Therefore, the component $\cM$ has dimension
        \begin{equation*}
            (2m-g+1)+(\beta'\cdot (-K_S)-1) \ge \beta\cdot (-K_S)-2g+2,
        \end{equation*}
        where the right hand side is the expected dimension. Rearranging and substituting $\beta=m\beta'$ gives
       \begin{equation*}
              (m-1)(\beta'\cdot (-K_S)-2) \le g.
         \end{equation*} 
         
Recall that we also have $g\le 2m-2$. Thus, the given multiple cover $f$ can only give rise to a component $\cM$ if $\beta'\cdot (-K_S)=4$ and $g=2m-2$, hence $\rho(g,1,m)=0$. In this case, the space $\cM_\beta(C,S)$ is isomorphic near $f$ to the 6-dimensional space of maps which factor as 
        \begin{equation*}
            C\to \P^1\to X,
        \end{equation*}
        where $f:C\to\P^1$ corresponds to one of a \emph{reduced} (for example, by the Gieseker-Petri theorem) set of points parametrizing rank 1 linear series on $C$ of minimal degree $m=\frac{g+2}{2}$, and the image of $\P^1\to S$ is a point of the Severi variety $Y_{\beta',0}(S)$. By Theorem \ref{severi_thm}, the Severi variety is generically smooth 
        of expected dimension, so $\cM$ is as well.
        
Consider now the exceptional case $g(C')=g(C)=1$. Since $C$ admits only finitely many maps of degree $m>1$ to some genus $1$ curve, by Theorem \ref{severi_thm}, the component $\cM$ would have dimension $\beta' \cdot (-K_S)$, which is less than the expected dimension $\beta\cdot (-K_S)$, a contradiction.

 \end{proof}

 \begin{proof}[Proof of Theorem \ref{thm:BN_surface}]
Let $\cM$ be any irreducible component of $\cM_\beta(C,S)$ containing $f$. Then, $\cM$ satisfies the hypothesis of Proposition \ref{prop:multiple covers}, because the condition on anti-canonical degree of $C'$ is open. Theorem \ref{thm:BN_surface} follows.
 \end{proof}

\begin{remark}
    The proof of Proposition \ref{prop:multiple covers} also shows that, if the Severi variety $Y_{\beta,g}(S)$ is irreducible, then the open locus of $\cM_\beta(C,S)$ consisting of maps whose image has anti-canonical degree at least 5 is also irreducible. Severi varieties are known in many cases to be irreducible, see for example \cite{harris_severi,testa,cd_severi,cht1,cht2}.
\end{remark}

\section{Toric surfaces}\label{sec:toric_surface}

We now specialize to the case of a smooth, projective toric surface $S$. By Proposition \ref{prop:multiple covers}, for a general curve $C$ and any non-zero curve class $\beta$, any component of the moduli space of maps $\cM_{\beta}(C,S)$ is generically smooth of expected dimension as long as its general point is not a multiple cover of a curve of anti-canonical degree at most 3. When $S=\P^2$, these curves are precisely the lines, so we recover the non-degeneracy condition in the classical Brill-Noether theorem. In this section, we prove Theorem \ref{thm:toric_low_degree}, which classifies the (non-boundary) curves of low anti-canonical degree on any smooth, projective toric surface $S$.

\begin{definition}\label{def:fake_P2}
Let $N\cong\Z^2$ be a lattice. A \emph{fake toric projective plane} $S'$ is a projective toric surface associated to a complete fan $N$ whose distinct 1-dimensional rays $n_1,n_2,n_3\in N$ satisfy $n_1+n_2+n_3=0$.
\end{definition}

We abuse terminology slightly, identifying a ray in $N$ with a primitive vector that spans it. If any two of $n_1,n_2,n_3$ form an integral basis for $N$, then $S'\cong\mathbb{P}^2$; otherwise, the surface $S'$ is singular. See Example \ref{example: fake plane} for a singular example. See also \cite{buzynska,conrads,kasprzyk,cgkn,bauerle} for work on fake toric \emph{weighted} projective spaces of arbitrary dimension.

Fake toric projective planes necessarily have $\Pic(S')\cong\mathbb{Z}$ \cite[\S 3.4]{fulton_toric}, so in particular, a natural notion of degeneracy of a non-constant map $f:C\to S'$ is that the image of $f$ lies in the primitive linear system of $S'$. Therefore, Corollary \ref{cor:BN_toric_surface} asserts that any $f:C\to S$ mapping generically to the interior of $S$ moves in a family of expected dimension, as long as it is non-degenerate after post-composition with any toric morphism $\pi$ allowed by (i) and (ii) of Theorem \ref{thm:toric_low_degree}.

We now set up the proof of Theorem \ref{thm:toric_low_degree}, which uses standard tools of toric geometry; we follow the conventions of \cite{fulton_toric}. Let $N\cong\mathbb{Z}^2$ be a lattice, and let $S$ be a smooth, projective toric surface with fan $\Sigma\subset N_{\mathbb{R}}$. Let $n_1,\ldots,n_c\in N$ be primitive vectors spanning the 1-dimensional rays of $\Sigma$, ordered counter-clockwise so that the 2-dimensional cone $\sigma_i$ is spanned by $n_i$ and $n_{i+1}$ (indices taken modulo $c$). Let $T \subseteq S$ be the open torus and let $C' \subset S$ be an integral curve generically contained in $T$. 

Let $M=\text{Hom}(N,\mathbb{Z})$ be the dual lattice, and let $F\in \mathbb{C}[M]$ be the equation of $C'$ in $T$. The element $F$ is only well-defined up to multiplication by a unit in $\mathbb{C}[M]$. For a choice of such an $F$, write
   \begin{equation*}
       F=\sum_{m\in M}a_m\chi^m,
   \end{equation*}
   where all but finitely many coefficients $a_m$ are zero. 
   
   \begin{definition}
   Let
   \begin{equation*}
       \cP(F)^+=\{m\in M|a_m\neq 0\},
   \end{equation*}
   and let $\cP(F)$ be the \emph{Newton polygon} of $F$, by definition the convex hull of $\cP(F)^+$. The set $\cP(F)$ and its convex hull are well-defined up to translation in $M$.
   \end{definition}

   For any $i=1,2,\ldots,c$, let $\ell_i\subset M$ be the unique line with the following two properties. First, the function $\ev_i$ on $M=\text{Hom}(N,\mathbb{Z})$ given by evaluation at a non-zero vector on the ray $n_i$ is constant on $\ell_i$. Second, the line $\ell_i$ contains at least one point of $\cP(F)^+$ on which $\ev_i$ is minimized. In particular, every point of $\cP(F)^+$ not lying on $\ell_i$ lies on the same side of $\ell_i$, and every point of $\cP(F)^+$ lying on $\ell_i$ in fact lies on the boundary of $\cP(F)$.
   
   Because $S$ is smooth, the lines $\ell_i,\ell_{i+1}$ necessarily intersect at a lattice point of $M$, which we denote by $\mu_i$. Let $e_i\subset \ell_i$ be the (possibly degenerate) directed segment from $\mu_{i-1}$ to $\mu_i$. Then, the points $\mu_1,\ldots,\mu_c$, some of which may be equal to each other, form the vertices of a convex polygon $\overline{\cP}(F)$ circumscribing $\cP(F)^+$, and appear in order around its boundary. If there are only two distinct points among the $\mu_i$, then the polygon $\overline{\cP}(F)$ is degenerate; in this case, its boundary contains two opposite directed edges. By construction, each edge $e_i$ contains at least one point of $\cP(F)^+$, and all points of $e_i\cap \cP(F)^+$ lie on the boundary of $\cP(F)$, and between the points $\mu_{i-1},\mu_i$. Let $\nu^{-}_{i},\nu^{+}_{i}$ be the points of $e_i\cap \cP(F)^+$ closest to $\mu_{i-1},\mu_i$, respectively.

\begin{example}
    Take $N= \Z^2$ and $S= \P^2$, whose fan has rays in the directions $n_1=(1,0),n_2=(0,1)$ and $n_3=(-1,-1)$. Let $F(x,y)=1+x+y+xy$, where $x=\chi^{e_1^{*}}$ and $y=\chi^{e_2^{*}}$. The polygons $\cP(F)$ and $\overline{\cP}(F)$ are depicted in Figure \ref{fig: example 1}.

    \begin{figure}[H]
    \begin{center}
    \begin{tikzpicture}[xscale=0.55,yscale=0.55]

    \node at (0,0) {$\bullet$};

    \draw [ultra thick] (0,0) to (4,0);
    \draw  (-2,0) to (6,0);
    \node at (7,0) {$\ell_2$};
    \node at (4,0.5) {$\mu_2$};
    \node at (2,-0.5) {$\nu_2^+$};
    \node at (2,0) {$\bullet$};
    
    \draw [ultra thick] (4,0) to (0,4);
    \draw  (6,-2) to (-2,6);
    \node at (7,-2.5) {$\ell_3$};
    \node at (0.7,4) {$\mu_3$};
    \node at (3.2,2.5) {$\nu_3^+=\nu_3^-$};
    \node at (2,2) {$\bullet$};
    
    \draw [ultra thick] (0,0) to (0,4);
    \draw  (0,-2) to (0,6);
    \node at (0,7) {$\ell_1$};
    \node at (-2.3,-0.5) {$\nu_1^+=\mu_1= \nu_2^-$};
    \node at (-0.5,2) {$\nu_1^-$};
    \node at (0,2) {$\bullet$};

    \draw [thick] (0,0) to (0,2);
    \draw [thick] (0,2) to (2,2);
    \draw [thick] (2,0) to (2,2);
    \draw [thick] (0,0) to (0,2);
    
    \end{tikzpicture}
    \caption{The polygons $\cP(F)$ and $\overline{\cP}(F)$ for $F(x,y)=F(x,y)=1+x+y+xy$ in $\P^2$ }\label{fig: example 1}
    \end{center}
    \end{figure}
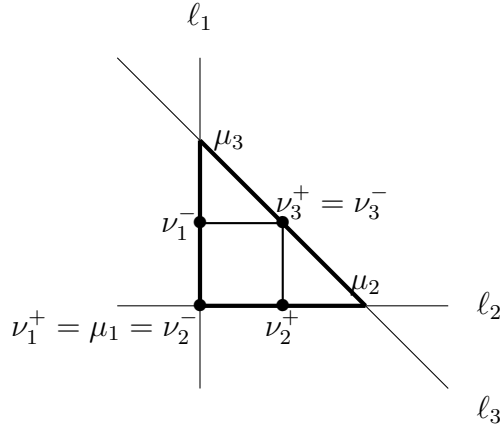
    The Newton polygon $\cP(F)$ has vertices $0,e_1^{*},e_2^{*},e_1^{*}+e_2^{*}\in M$, while $\overline{\cP}(F)$ has vertices $0,2e_1^{*},2e_2^{*}$. Being a conic, the anti-canonical degree of $C'$ in $\P^2$ is equal to $6$, which agrees with Lemma \ref{degree_perimeter} below.
\end{example}

\begin{definition}
       Let $m,m'\in M$ be lattice points. We define the \emph{distance} $\delta(m,m')$ to be equal to 0 if $m=m'$, and equal to one more than the number of lattice points in the interior of the segment between $m,m'$ otherwise.
\end{definition}

   \begin{lemma}\label{degree_perimeter}
     Let $D_i\subset S$ be the toric divisor corresponding to $n_i$, for each $i$. Then, we have $C'\cdot D_i=\delta(\mu_{i-1},\mu_{i})$. In particular, we have
     \begin{equation*}
     C'\cdot (-K_S)=\sum_{i=1}^c C'\cdot D_i=\sum_{i=1}^c \delta(\mu_{i-1},\mu_{i}).
     \end{equation*}
     
   \end{lemma}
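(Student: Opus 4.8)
The plan is to reduce the intersection number $C' \cdot D_i$ to a purely combinatorial quantity on the dual lattice $M$, using the standard dictionary between toric divisors and the Newton polygon. The key observation is that, for a toric surface, the self-intersection data and the intersection of a curve with a toric boundary divisor $D_i$ can be read off from the edge of the (circumscribing) Newton polygon corresponding to the ray $n_i$. I would begin by recalling that $D_i$ is the orbit closure of the ray $n_i$, an irreducible curve isomorphic to $\P^1$, and that the intersection number $C' \cdot D_i$ counts (with multiplicity) the points where the closure of $\{F = 0\}$ in $S$ meets $D_i$. The central computation is to show that this count equals $\delta(\mu_{i-1}, \mu_i)$, the lattice length of the edge $e_i$ of $\overline{\cP}(F)$.

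First I would set up local affine coordinates near the divisor $D_i$. Choose coordinates so that $n_i$ is a standard basis vector of $N$; since $S$ is smooth, the two cones $\sigma_{i-1}$ and $\sigma_i$ adjacent to $n_i$ give affine charts $U_{\sigma_{i-1}}, U_{\sigma_i} \cong \A^2$, in each of which $D_i$ is a coordinate axis. In such a chart, the evaluation function $\ev_i$ on $M$ becomes the exponent of the coordinate $t$ cutting out $D_i$ locally. The restriction of $\{F=0\}$ to this chart is obtained by multiplying $F$ by an appropriate monomial (a unit on $T$) so that the minimal $\ev_i$-value among the exponents appearing is $0$; the resulting polynomial, restricted to $D_i$ by setting $t = 0$, is supported exactly on the points of $\cP(F)^+$ lying on the minimizing line $\ell_i$. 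The number of zeros of this restriction on $D_i \cong \P^1$, counted with multiplicity, is then the lattice length of the segment $\ell_i \cap \cP(F)$, i.e., the number of lattice subintervals along the edge.

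The heart of the argument is matching this lattice length with $\delta(\mu_{i-1}, \mu_i)$. Here I would use the construction preceding the lemma: the edge $e_i$ of $\overline{\cP}(F)$ runs from $\mu_{i-1}$ to $\mu_i$ along $\ell_i$, and contains the extreme support points $\nu_i^-, \nu_i^+$. The subtlety is that $\overline{\cP}(F)$ circumscribes $\cP(F)^+$, so $\mu_{i-1}, \mu_i$ may be strictly outside the support; what must be verified is that the intersection multiplicity of $C'$ with $D_i$, computed via the degree of the restricted polynomial on $D_i \cong \P^1$, sees the full edge $e_i$ from $\mu_{i-1}$ to $\mu_i$ rather than just the segment between $\nu_i^-$ and $\nu_i^+$. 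This is exactly the toric statement that the total degree of a divisor on $\P^1 = D_i$ accounts also for the points where $C'$ passes through the torus-fixed points $D_{i-1} \cap D_i$ and $D_i \cap D_{i+1}$, contributing the ``corner'' lengths $\delta(\mu_{i-1}, \nu_i^-)$ and $\delta(\nu_i^+, \mu_i)$. I would verify this by examining the two adjacent charts and checking that the fixed point $D_{i-1}\cap D_i$ lies on $C'$ with multiplicity $\delta(\mu_{i-1}, \nu_i^-)$, which follows from the definition of $\mu_{i-1}$ as the intersection $\ell_{i-1} \cap \ell_i$ and the smoothness of $S$ ensuring this is a lattice point. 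Summing over all $i$ and using $-K_S = \sum_i D_i$ gives the displayed formula.

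The main obstacle I anticipate is the careful bookkeeping at the torus-fixed points: one must confirm that the contributions of $C'$ to $C' \cdot D_i$ at the two endpoints $\mu_{i-1}, \mu_i$ are correctly apportioned between adjacent divisors so that no intersection is double-counted and none is missed. In particular, the degenerate cases where an edge $e_i$ collapses (when $\mu_{i-1} = \mu_i$, giving $C' \cdot D_i = 0$) must be handled, corresponding to $C'$ being disjoint from $D_i$. The smoothness hypothesis on $S$ is essential throughout, as it guarantees that consecutive lines $\ell_i, \ell_{i+1}$ meet at a genuine lattice point $\mu_i$, making the distance function $\delta$ well-defined and integral.
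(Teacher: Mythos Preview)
Your proposal is correct and follows essentially the same approach as the paper: both compute $C'\cdot D_i$ by normalizing $F$ in the affine charts $S_{i-1},S_i$ adjacent to $D_i$, restricting to $D_i$, and reading off the degree of the resulting one-variable polynomial from the position of the support on $\ell_i$ relative to $\mu_{i-1},\mu_i$. The only cosmetic difference is that the paper packages the endpoint contributions via an inclusion--exclusion over the two charts, obtaining $C'\cdot D_i=\delta(\nu_i^-,\mu_i)+\delta(\mu_{i-1},\nu_i^+)-\delta(\nu_i^-,\nu_i^+)$, whereas you propose to compute the multiplicities at the two torus-fixed points of $D_i$ separately and add them to the interior contribution $\delta(\nu_i^-,\nu_i^+)$; these are the same calculation.
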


   \begin{proof}
       Let $S_i=\text{Spec}(\mathbb{C}[\chi^{m_i},\chi^{m_{i+1}}])$ be the affine chart of $S$ corresponding to $\sigma_i$, where $m_i,m_{i+1}\in M$ form a basis of the dual cone $\sigma_i^{\vee}$ dual to $n_i,n_{i+1}$. The restriction of $C'$ to $S_i$ is cut out by the equation $F_i:=F\cdot \chi^{-\mu_i}\in \mathbb{C}[\chi^{m_i},\chi^{m_{i}+1}]$. Indeed, the monomial $\chi^{-\mu_i}$ is the unique one whose product with $F$ is an irreducible element of $\mathbb{C}[\chi^{m_i},\chi^{m_{i}+1}]$. Multiplication by $\chi^{-\mu_i}$ has the effect of translating the Newton polygon of $\cP(F)^+$ by $-\mu_i$. Moreover, every point $m\in\cP(F_i)^+$ has $m(n_i),m(n_{i+1})\ge 0$, where each inequality is an equality for at least one point $m\in \cP(F_i)^+$ (possibly different for the two inequalities).
       
       The divisor $D_i$ is cut out in $S_i$ by the equation $\chi^{m_{i}}=0$. Setting $\chi^{m_i}=0$ in $F_i$ leaves a polynomial in $\chi^{m_{i+1}}$ of degree equal to $\delta(\nu^{-}_{i},\mu_i)$. Indeed, the non-zero term of $F_i$ given by the largest power of $\chi^{m_{i+1}}$ corresponds (after translation back by $\mu_i$) to the point of $\cP(F)^+$ on $\ell_i$ furthest from $\mu_i$, and hence closest to $\mu_{i-1}$. Thus, the degree of the intersection $C'\cap D_i$ inside $S_i$ is equal to $\delta(\nu^{-}_{i},\mu_i)$. Similarly, the degree of $C'\cap D_i$ upon restriction to the affine chart $S_{i-1}$ is equal to $\delta(\mu_{i-1},\nu^{+}_{i})$. Moreover, the intersection $C'\cap D_i$ has degree $\delta(\nu^{-}_{i},\nu^{+}_{i})$ on $S_{i-1}\cap S_i$ . Therefore, we have
       \begin{equation*}
           C'\cdot D_i=\delta(\nu^{-}_{i},\mu_i)+\delta(\mu_{i-1},\nu^{+}_{i})-\delta(\nu^{-}_{i},\nu^{+}_{i})=\delta(\mu_{i-1},\mu_{i}).
       \end{equation*}
   \end{proof}

   \begin{proof}[Proof of Theorem \ref{thm:toric_low_degree}]
    Let $C'\subset S$ be an integral curve of anti-canonical degree at most 3, which is not contained in the boundary of $S$. By Lemma \ref{degree_perimeter}, the anti-canonical degree of $C'$ is bounded below by the number of (directed) edges of $\overline{\cP}(F)$, which is at least 2. There are two cases.

    First, if $[C']\cdot (-K_S)=2$, then $\overline{\cP}(F)$ consists of two opposite directed edges $e_i,e_j$ of length 1, corresponding to opposite rays $n_i,n_j\in N$. Then, there exists a surjective toric morphism $\pi:S\to \P^1$, corresponding to the projection of $\Sigma$ onto the fan of $\P^1$ that contracts $n_i,n_j$. The curve $C'\subset X_{\Sigma}$ is contracted to a point under $\pi$, so we are in case (i) of Theorem \ref{thm:toric_low_degree}.

    If instead $[C']\cdot (-K_S)=3$, then $\overline{\cP}(F)$ is a triangle consisting of three edges $e_i,e_j,e_k\subset M$ of length 1. These edges correspond to rays $n_i,n_j,n_k\in N$ with $n_i+n_j+n_k=0$. Let $\Sigma'\subset \Sigma$ be the complete fan containing only the three rays $n_i,n_j,n_k$. The corresponding toric variety $S':=Y_{\Sigma'}$ is a fake toric projective plane, and the morphism $S\to S'$ induced by deleting all rays except $n_i,n_j,n_k$ sends $C'$ to a curve in the primitive linear system of $S'$, so we are in case (ii) of Theorem \ref{thm:toric_low_degree}.
   \end{proof}





\subsection{Examples}

Fix again a general curve $C\in\cM_g$. 

\begin{example}
As discussed above, when $S=\bP^2$, Corollary \ref{cor:BN_toric_surface} recovers the classical Brill-Noether Theorem, Theorem \ref{thm:BN_classical}, for $r=2$. 

Indeed, the only curves on $\bP^2$ of anti-canonical degree at most 3 are lines. Correspondingly, the only toric morphism $\pi$ out of $\bP^2$ allowed by Theorem \ref{thm:toric_low_degree} is the identity. Thus, Corollary \ref{cor:BN_toric_surface} asserts that non-constant maps to $\bP^2$ that are not multiple covers of lines move in a family of expected dimension. This is Theorem \ref{thm:BN_classical} in the case $r=2$.
\end{example}

\begin{example}
Let $S=\bP^1\times \bP^1$. A curve $C'$ of bi-degree $(a,b)$ has anti-canonical degree $2(a+b)$, which is at least 4 unless $C'$ is a ruling of $S$, that is, $(a,b)=(1,0)$ or $(a,b)=(0,1)$. The only toric morphisms $\pi$ out of $S$ allowed by Theorem \ref{thm:toric_low_degree} are the projections $\pi:S\to \bP^1$.  Thus, Corollary \ref{cor:BN_toric_surface} asserts that non-constant maps to $\bP^1\times \bP^1$ that are not multiple covers of rulings move in a family of expected dimension. This also follows from the case $r=1$ of Theorem \ref{thm:BN_classical}.
\end{example}

\begin{example}
Let $S=\Bl_q(\P^2)$ be the Hirzebruch surface given by the blow-up of $\P^2$ at a torus-fixed point. In this case, Corollary \ref{cor:BN_toric_surface} recovers Farkas's result, Theorem \ref{thm:farkas}, in dimension 2.

Indeed, write $[C']=d\H-k\E$ for the class of a curve $C'\subset S$, where $\H,\E$ are the hyperplane class (pulled back from $\bP^2$) and the class of the exceptional divisor, respectively. Then, we have
\begin{equation*}
    [C']\cdot(-K_S)=(d\H-k\E)\cdot(3\H-\E)=3d-k.
\end{equation*}
If $C'$ is irreducible and $[C']\cdot(-K_S)\le 3$, then either $(d,k)=(1,0)$ or $(d,k)=(1,1)$, corresponding to the curves $C'$ which map to lines under the toric blow-up $S\to \bP^2$. In the second case, $C'$ is also contracted under the toric projection $S\to\bP^1$. These two toric morphisms are the only ones allowed by Theorem \ref{thm:toric_low_degree}. Corollary \ref{cor:BN_toric_surface} asserts that non-constant maps $f$ to $S$ that are not multiple covers of lines move in a family of expected dimension. It asserts further that the moduli space of maps is generically smooth in a neighborhood of $f$, which does not seem to follow from Farkas's proof of Theorem \ref{thm:farkas}.
\end{example}

\begin{example}
    Let $S$ be the blow-up of $\P^2$ at two torus-fixed points. Then, the toric morphisms $\pi$ allowed by Theorem \ref{thm:toric_low_degree} are again the two projections $S\to\P^1$ and the blow-up $S\to\P^2$. Once more, the fibers of the projections $S\to \P^1$ also map to lines under the blow-up. Thus, any $f:C\to S$ whose image is not contained in a line after the blow-down to $\P^2$ moves in a family of expected dimension. 
    
    One can check that the same is true if $S$ is obtained instead by blowing up $\Bl_q(\P^2)$ at a torus-fixed point of the exceptional divisor.
    
\end{example}

The next example shows that curves which do not map to primitive curves under one contraction may map to primitive curves under others.

\begin{example}
 Let $\pi:S\to\P^2$ be the blow-up of $\P^2$ at its three torus-fixed points. Then, the class $\beta'=2\H-\E_1-\E_2-\E_3$ of a conic $C'$ through the three torus-fixed points has anti-canonical degree 3. By the calculation of Proposition \ref{prop:multiple covers}, multiple covers of such conics $C'$ may fail to move in a family of expected dimension in the moduli spaces of maps $\cM_{m\beta'}(C,S)$, despite the fact that $\pi(C')$ is not a line. However, under the blow-down $\pi':S\to\P^2$ at the other three ($-1$)-curves on $S$, the conics $C'$ contract to lines, that is, they are Cremona-equivalent to lines. The contractions $\pi,\pi'$ are depicted on the level of fans in Figure \ref{fig:cremona}.
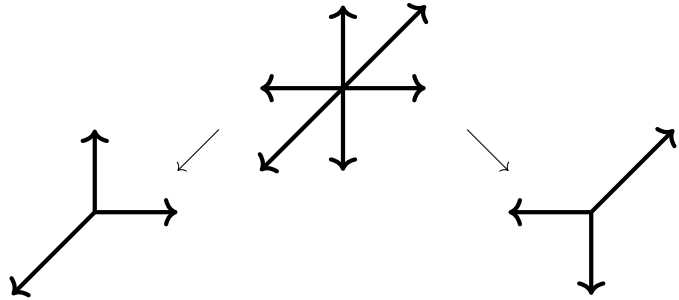
\begin{figure}[H]
    \begin{center}
    \begin{tikzpicture}[xscale=0.55,yscale=0.55]

    \draw [ultra thick,->] (0,0) -- (2,0);
    \draw [ultra thick,->] (0,0) -- (0,2);
    \draw [ultra thick,->] (0,0) -- (-2,-2);
    \draw [ultra thick,->] (0,0) -- (-2,0);
    \draw [ultra thick,->] (0,0) -- (0,-2);
    \draw [ultra thick,->] (0,0) -- (2,2);

    \draw [ultra thick,->] (-6,-3) -- (-4,-3);  
    \draw [ultra thick,->] (-6,-3) -- (-6,-1);  
    \draw [ultra thick,->] (-6,-3) -- (-8,-5);  

    \draw [ultra thick,->] (6,-3) -- (8,-1);   
    \draw [ultra thick,->] (6,-3) -- (6,-5);  
    \draw [ultra thick,->] (6,-3) -- (4,-3);  


     \draw [->] (-3,-1) -- (-4,-2);
     \draw [->] (3,-1) -- (4,-2);

    \end{tikzpicture}
    \end{center}
    \caption{Fan of the blow-up of $\mathbb{P}^2$ at three points, with the two contractions to the fan of $\mathbb{P}^2$}\label{fig:cremona}
\end{figure}

\end{example}

The final example illustrates the necessity of considering toric morphisms $\pi:S\to S'$ to \emph{fake} projective planes, as opposed to just projective planes, to witness degeneracy of maps.

\begin{example}\label{example: fake plane}
Let $N=\mathbb{Z}^2$, and let $\Sigma'$ be the complete fan with rays $n_1=(-2,1),n_2=(1,-2),n_3=(1,1)$. Let $S'=Y_{\Sigma'}$ be the corresponding fake $\P^2$, which is a singular toric surface isomorphic to $(\P^2)/(\mathbb{Z}/3)$.
\begin{figure}[H]
    \centering
    \begin{minipage}{0.45\textwidth}
        \centering
        \begin{tikzpicture}[scale=1.5]
            \draw[ultra thick,->] (0,0) -- (-2,1);  
            \draw[ultra thick,->] (0,0) -- (1,-2);  
            \draw[ultra thick,->] (0,0) -- (1,1);   

            \draw[blue, thick,->] (0,0) -- (1,0);   
            \draw[blue, thick,->] (0,0) -- (0,1);   
            \draw[blue, thick,->] (0,0) -- (-1,1);  
            \draw[blue, thick,->] (0,0) -- (-1,0);  
            \draw[blue, thick,->] (0,0) -- (0,-1);  
            \draw[blue, thick,->] (0,0) -- (1,-1);  

            \node at (-2.3,1) {$n_1$};
            \node at (1.3,-2) {$n_2$};
            \node at (1.2,1.2) {$n_3$};

            \node[blue] at (1.3,0.1) {$(1,0)$};
            \node[blue] at (0.1,1.2) {$(0,1)$};
            \node[blue] at (-1.2,1.2) {$(-1,1)$};
            \node[blue] at (-1.5,-0.1) {$(-1,0)$};
            \node[blue] at (-0.1,-1.2) {$(0,-1)$};
            \node[blue] at (1.2,-1.2) {$(1,-1)$};
        \end{tikzpicture}
        \caption{Fan $\Sigma'$ in black, and the six added rays to obtain the fan $\Sigma$ in blue}\label{fig:fan_9rays}
    \end{minipage} \hfill
    \begin{minipage}{0.45\textwidth}
        \centering
        \begin{tikzpicture}[scale=1.5]
            \draw[ultra thick] (2,1) -- (1,2)  -- (0,0) -- cycle; 


            \node at (2.5,1) {$(2,1)$};
            \node at (2,1) {$\bullet$};
            \node at (1.1,2.3) {$(1,2)$};
            \node at (1,2) {$\bullet$};
            \node at (0.8,0.8) {$(1,1)$};
            \node at (1,1) {$\bullet$};
            \node at (0,0) {$\bullet$};
            \node at (-0.3,-0.3) {$(0,0)$};
        \end{tikzpicture}
        \caption{Newton Polygon for \( F(x,y) = x^2y + xy^2 - 3xy + 1 \)}\label{fig:fake_newton_polygon}
    \end{minipage}
\end{figure}

Write $x=\chi^{e_1^*}$ and $y=\chi^{e_2^*}$, and let $C'$ be the curve on the singular fake projective plane $S'=Y_{\Sigma'}$ cut out by the equation $F(x,y)=x^2y+xy^2-3xy+1$. The fan $\Sigma'$ is shown in Figure \ref{fig:fan_9rays}, and $\mathcal{P}(F)=\overline{\mathcal{P}}(F)$ is shown in Figure \ref{fig:fake_newton_polygon}. The curve $C'$ is irreducible, and has arithmetic genus 1 (e.g. by \cite[p. 91]{fulton_toric}) and lies in the primitive linear system on $S'$. On the other hand, the curve $C'$ is singular at the point $(x,y)=(1,1)$, so is rational.

Let $\Sigma$ be the fan obtained by adding the six rays 
\begin{equation*}
    (1,0),(0,1),(-1,1),(-1,0),(0,-1),(1,-1)
\end{equation*} 
to $\Sigma'$, and let $S$ be the smooth toric surface associated to $\Sigma$. Let $\pi:S\to S'$ be the fake projection induced by deletion of rays. Then, the pre-image (or strict transform) $\widetilde{C'}:=\pi^{-1}(C')$ is an irreducible rational curve of anti-canonical degree 3 on $S$, so multiple covers of $\widetilde{C'}$ by general curves may fail to move in a family of expected dimension. On the other hand, because $\cP(F)$ is a triangle containing the point $(1,1)\in M$ in its interior, the curve $C'$ cannot map to a line under any toric birational morphism $S\to \P^2$ induced by deleting rays.

The same assertions hold when $F$ is replaced by the equation of a smooth cubic, such as $F(x,y)=x^2y+xy^2-xy+1$, except that it is no longer true that a general curve (of genus greater than 1) admits multiple covers of $\widetilde{C'}$. 
\end{example}

\bibliographystyle{alpha} 
\bibliography{bib}

\end{document}